\theoremstyle{plain}
\newtheorem*{mainthm}{Main Theorem}
\newtheorem{thm}{Theorem}[section]
\newtheorem{q}[thm]{Question}
\newtheorem{lem}[thm]{Lemma}
\newtheorem{cor}[thm]{Corollary}
\newtheorem{conj}[thm]{Conjecture}
\theoremstyle{definition}
\newtheorem{conventions}[thm]{Conventions}
\newtheorem{notation}[thm]{Notation}
\newtheorem{rmk}[thm]{Remark}
\newtheorem{defn}[thm]{Definition}
\newcommand{\Hom}{{\rm Hom}}
\newcommand{\Spec}{{\rm Spec \,}}
\newcommand{\Char}{{\rm char \,}}
\newcommand{\sF}{{\mathcal F}}
\newcommand{\sH}{{\mathcal H}}
\newcommand{\sS}{{\mathcal S}}
\newcommand{\sX}{{\mathcal X}}
\newcommand{\sY}{{\mathcal Y}}
\newcommand{\A}{{\mathbb A}}
\newcommand{\G}{{\mathbb G}}
\renewcommand{\P}{{\mathbb P}}
\def\<{\langle}
\def\>{\rangle} 
\def\-{\overline} 
\def\~{\widetilde}
\def\^{\widehat}
\begin{document}

\author{Chetan Balwe}
\address{School of Mathematics, Tata Institute of Fundamental
Research, Homi Bhabha Road, Mumbai 400005, India}
\email{cbalwe@math.tifr.res.in}

\author{Anand Sawant}
\address{School of Mathematics, Tata Institute of Fundamental
Research, Homi Bhabha Road, Mumbai 400005, India}
\email{anands@math.tifr.res.in}

\thanks{Anand Sawant was supported by the Council of Scientific and Industrial Research, India under the Shyama Prasad Mukherjee Fellowship SPM-07/858(0096)/2011-EMR-I}

\title{$R$-equivalence and $\A^1$-connectedness in anisotropic groups}

\date{}

\begin{abstract}
We show that if $G$ is an anisotropic, semisimple, absolutely almost simple, simply connected group over a field $k$, then two elements of $G$ over any field extension of $k$ are $R$-equivalent if and only if they are $\A^1$-equivalent. As a consequence, we see that $Sing_*(G)$ cannot be $\A^1$-local for such groups.  This implies that the $\A^1$-connected components of a semisimple, absolutely almost simple, simply connected group over a field $k$ form a sheaf of abelian groups.
\end{abstract} 

\maketitle
\section{Introduction}
\label{section-intro}

The notion of \emph{$R$-equivalence} of rational points on a variety, introduced by Manin in 1970's, has been extensively studied in the context of algebraic groups, where it provides a lot of information in the study of rationality properties.  In this note, we explore a connection between the notions of $R$-equivalence in an algebraic group and the sheaf of \emph{$\A^1$-connected components}, in the sense of Morel-Voevodsky. 

Let $G$ be an algebraic group over a field $k$.  If $G$ is an isotropic, semisimple, absolutely almost simple, simply connected group over $k$, classical results can be reinterpreted as saying that we have an isomorphism $G(k)/R \simeq \pi_0^{\A^1}(G)(k)$, where $\pi_0^{\A^1}(G)$ denotes the Nisnevich sheaf of \emph{$\A^1$-connected components} of $G$ (see Theorem \ref{theorem isotropic} below).  In this note, we prove the following result:

\begin{mainthm}
\label{main theorem}
Let $G$ be an anistropic, semisimple, absolutely almost simple, simply connected group over a field $k$ of characteristic $0$.  Let $F$ be a field extension of $k$.  Then the canonical morphism $G(F) \to \pi_0^{\A^1}(G)(F)$ factors through the quotient morphism $G(F) \to G(F)/R$ and induces an isomorphism
\[
G(F)/R \xrightarrow{\simeq} \pi_0^{\A^1}(G)(F).
\]
\noindent Moreover, $Sing_*(G)$ is not $\A^1$-local.  (Here $Sing_*$ denotes the Morel-Voevodsky singular complex construction in $\A^1$-homotopy theory.)
\end{mainthm}

The conditions on $G$ in the statement of the Main Theorem are imposed only because our proof crucially depends on \cite[Th\'eor\`eme 5.8]{Gille}, where they are required.  It seems possible to lift the assumption on the characteristic of the base field (see Remark \ref{remark Gabber}). It may be possible to generalize the Main Theorem to other classes of groups by proving a suitable generalization of \cite[Th\'eor\`eme 5.8]{Gille}. 

This behaviour of anisotropic groups can be contrasted with the behaviour of isotropic groups.  For instance, it has been shown that $Sing_*(G)$ is $\A^1$-local, when $G$ is smooth, split over a perfect field whose semisimple part has fundamental group of order prime to $\Char k$ (see \cite[Proposition 5.11]{Wendt}), and when $G$ is an isotropic reductive group (\cite[Proposition 4.1]{Wendt-Voelkel}).  This allows one to study $\A^1$-connected components of $G$ in terms of \emph{naive} $\A^1$-homotopies.  Indeed, in this case, $\pi_0^{\A^1}(G)(F)$ coincides with $\sS(G)(F)$ and with $G(F)/R$ and can be explicitly described as the quotient of $G(F)$ by its \emph{elementary subgroup} $EG(F)$ (see \cite{Wendt-Voelkel} and Theorem \ref{theorem isotropic} below).

A result of Chernousov-Merkurjev shows that the group of $R$-equivalence classes of a semisimple, absolutely almost simple, simply connected algebraic group over a field is abelian.  This combined with classical results and our Main Theorem shows that for such groups, $\pi_0^{\A^1}(G)$ is a sheaf of abelian groups.

We now briefly outline the contents of this paper.  In Section \ref{section-preliminaries}, we recollect preliminaries on $\A^1$-connectedness and a describe a geometric criterion for two points of an algebraic group to be $\A^1$-equivalent.  In Section \ref{section algebraic groups}, we interpret known results about algebraic groups and $R$-equivalence in the setup of this paper.  These facts are put together in Section \ref{section-proofs} to give a proof of the Main Theorem. 

\section{Preliminaries on \texorpdfstring{$\A^1$}{A1}-connectedness}
\label{section-preliminaries}

Let $k$ be a field and let $Sm/k$ denote the site of smooth schemes of finite type over $k$ along with the Nisnevich topology. We will work with the \emph{$\A^1$-homotopy category} $\sH(k)$ constructed in \cite{Morel-Voevodsky} by inverting all the projection maps of the form $\sX \times \A^1 \to \sX$ in the \emph{simplicial homotopy category} $\sH_s(k)$.  We will follow the notation and terminology used in that paper. In this section, we will briefly recall some ideas from \cite{Balwe-Hogadi-Sawant}. 

For any smooth scheme $U$ over $k$, we say that two morphisms $f,g: U \to \sX$ are \emph{$\A^1$-homotopic} if there exists a morphism $h: U \times \A^1 \to \sX$ such that $h|_{U \times \{0\}} = f$ and $h|_{U \times \{1\}} = g$. We say that $h$ is an \emph{$\A^1$-homotopy} and that it \emph{connects $f$ to $g$}. We say that $f,g: U \to \sX$ are \emph{$\A^1$-chain homotopic} if there exists a finite sequence $f_0=f, \ldots, f_n=g$ such that $f_i$ is $\A^1$-homotopic to $f_{i+1}$, for all $i$. It is easy to see that $\A^1$-chain homotopy is an equivalence relation. 

A simplicial sheaf $\sX$ is said to be \emph{$\A^1$-local} if for any simplicial sheaf $\sY$, the projection map $\sY \times \A^1 \to \sY$ induces a bijection $$\Hom_{\sH_s(k)}(\sY, \sX) \to \Hom_{\sH_s(k)}(\sY \times \A^1, \sX).$$  

There exists an \emph{$\A^1$-localization} endofunctor (\cite[\textsection 2, Theorem 1.66 and p.107]{Morel-Voevodsky}) on the simplicial homotopy category $\sH_s(k)$, denoted by $L_{\A^1}$, such that for every simplicial sheaf $\sX$, the simplicial sheaf $L_{\A^1}(\sX)$ is $\A^1$-local.

We next recall the Morel-Voevodsky singular complex construction $Sing_*$ in $\A^1$-homotopy theory (see \cite[p.87]{Morel-Voevodsky}).  For a simplicial sheaf $\sX$ on $Sm/k$, define $Sing_*(\sX)$ to be the simplicial sheaf given by
\[
Sing_*(\sX)_n = \underline{\Hom}(\Delta_n,\sX_n), 
\]
\noindent where $\Delta_{\bullet}$ denotes the cosimplicial sheaf  
\[
\Delta_n  = \Spec\left(\frac{k[x_0,...,x_n]}{(\sum_ix_i=1)}\right)
\]
\noindent with the natural coface and codegeneracy maps motivated from the ones on topological simplices.

\begin{defn}
\label{definition-S}
Let $\sX$ be a simplicial sheaf on $Sm/k$.  The sheaf of \emph{$\A^1$-chain connected components} of $\sX$ is defined by
\[
\sS(\sX) := \pi_0^s(Sing_*(\sX)),
\] 
\noindent where $\pi_0^s$ of a simplicial sheaf denotes the sheaf of its simplicially connected components.  
\end{defn}

If $X$ is a scheme over $k$, then it is easy to see that $\sS(X)$ is the sheafification in Nisnevich topology of the presheaf on $Sm/k$ that associates with every smooth scheme $U$ over $k$ the set of equivalence classes in $X(U)$ under the relation of $\A^1$-chain homotopy.

\begin{defn}
\label{definition pi0A1}
Let $\sX$ be a simplicial sheaf on $Sm/k$.  The sheaf of \emph{$\A^1$-connected components} of $\sX$ is defined by 
\[
\pi_0^{\A^1}(\sX) := \pi_0^s(L_{\A^1}(\sX)).
\]
\end{defn}

The main obstacle in the study of $\pi_0^{\A^1}$ of a simplicial sheaf is the explicit description of the $\A^1$-localization functor is cumbersome to handle.  The following  result, proved in \cite{Balwe-Hogadi-Sawant}, allows us to use geometric methods in the study of the $\A^1$-connected components sheaf of a smooth scheme over $k$.

\begin{thm}
\label{theorem lim S^n}
Let $\sF$ be a sheaf of sets on $Sm/k$.  Then the sheaf $~\underset{n}{\varinjlim} ~S^n(\sF)$ is $\A^1$-invariant.  Moreover, if $\pi_0^{\A^1}(\sF)$ is $\A^1$-invariant, then the canonical map
\[
\pi_0^{\A^1}(\sF) \to \underset{n}{\varinjlim}~S^n(\sF) 
\]
\noindent is an isomorphism.
\end{thm}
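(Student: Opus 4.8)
The plan is to prove the two assertions separately: first that $\sL(\sF) := \varinjlim_n S^n(\sF)$ is $\A^1$-invariant (where $S = \sS$ is the chain-connected-components functor of Definition \ref{definition-S}), and then that, under the hypothesis, the comparison morphism between $\pi_0^{\A^1}(\sF)$ and $\sL(\sF)$ is an isomorphism. Throughout I would use that each $\sF \to S(\sF)$ is an epimorphism of sheaves (since $S(\sF) = \pi_0^s(Sing_*(\sF))$ is a quotient of $Sing_*(\sF)_0 = \sF$), so that $\sF \to S^n(\sF)$ and $\sF \to \sL(\sF)$ are epimorphisms, and that sections of the filtered colimit $\sL(\sF)$ over a quasi-compact smooth scheme are, Nisnevich-locally, represented at a finite stage $S^n(\sF)$.

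For the first assertion, recall that $\A^1$-invariance of a sheaf $\sG$ amounts to the equality $\alpha = p^* s_0^* \alpha$ for every section $\alpha$ over a cylinder $U \times \A^1$, where $p \colon U \times \A^1 \to U$ is the projection and $s_0 \colon U \to U \times \A^1$ the zero section (the opposite composite $s_0^* p^*$ being automatically the identity). Given $\alpha \in \sL(\sF)(U \times \A^1)$, I would represent it Nisnevich-locally by a section $\tilde\alpha \in S^n(\sF)(U \times \A^1_s)$ and exploit the multiplication map $m \colon \A^1_s \times \A^1_t \to \A^1_s$, $(s,t) \mapsto st$: the pullback $\Phi := (\mathrm{id}_U \times m)^* \tilde\alpha$ is a section over $(U \times \A^1_s) \times \A^1_t$, that is, an $\A^1_t$-homotopy of sections of $S^n(\sF)$ over the base $U \times \A^1_s$. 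Its restrictions to $t = 1$ and $t = 0$ are $\tilde\alpha$ and $p^* s_0^* \tilde\alpha$ respectively, so by the very definition of $S$ these two become equal in $S^{n+1}(\sF)(U \times \A^1_s)$, hence in $\sL(\sF)(U \times \A^1_s)$. Passing to the colimit gives $\alpha = p^* s_0^* \alpha$, which is the desired $\A^1$-invariance.

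For the second assertion, I would first use the first assertion to show that $\sF \to \sL(\sF)$ is the \emph{universal} morphism from $\sF$ to an $\A^1$-invariant sheaf of sets: any morphism $\sF \to \sG$ with $\sG$ $\A^1$-invariant sends $\A^1$-homotopic sections to equal sections (again because $\sG(U \times \A^1) = \sG(U)$), hence factors through $S(\sF)$ by sheafification, and then through every $S^n(\sF)$ and through $\sL(\sF)$, uniquely since $\sF \to \sL(\sF)$ is an epimorphism. Granting the hypothesis that $\pi_0^{\A^1}(\sF)$ is $\A^1$-invariant, this universal property applied to the canonical morphism $\sF \to \pi_0^{\A^1}(\sF)$ produces a natural epimorphism $q \colon \sL(\sF) \to \pi_0^{\A^1}(\sF)$; the canonical map of the statement is the inverse of $q$, so it suffices to prove that $q$ is a monomorphism. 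Here I would use that $\sF \to \sL(\sF)$ induces an isomorphism on $\pi_0^{\A^1}$ (each $\sF \to S(\sF)$ does, being surjective and collapsing a relation that is already trivial after $\A^1$-localization), so that injectivity of $q$ is equivalent to the statement that the $\A^1$-invariant sheaf $\sL(\sF)$ is in fact $\A^1$-\emph{local}. I would establish this last point by comparing the $S$-tower with Morel's explicit presentation of $L_{\A^1}$ as a homotopy colimit of iterated $Sing_*$ and Nisnevich-fibrant replacements: on $\pi_0^s$ this tower reduces to the $S$-tower up to the fibrant replacements, and the hypothesis that $\pi_0^{\A^1}(\sF)$ is $\A^1$-invariant forces those replacements to introduce no identifications on $\pi_0$ beyond the ones already recorded by $\sL(\sF)$.

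The main obstacle I anticipate lies entirely in the first assertion, in the bookkeeping forced by sheafification. The multiplication-homotopy argument is clean when $\alpha$ admits a genuine representative $\tilde\alpha$ over the whole cylinder $U \times \A^1_s$, but in general such a representative exists only after a Nisnevich cover of $U \times \A^1_s$ whose members are \emph{not} of product form $V \times \A^1$; since both the zero section $s_0$ and the shearing map $m$ are defined relative to the $\A^1_s$-direction, one cannot simply restrict the argument to the cover. Making the identification $\alpha = p^* s_0^* \alpha$ descend correctly --- equivalently, showing that the naive construction genuinely must be iterated and that the colimit is exactly what absorbs these locally defined homotopies --- is the technical heart of the proof, and is precisely the reason the single functor $S$ does not already produce an $\A^1$-invariant sheaf. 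By contrast, the injectivity of $q$ in the second assertion should be comparatively formal once the localization tower is in hand, with the $\A^1$-invariance hypothesis entering only to prevent the fibrant replacements from creating new connected components.
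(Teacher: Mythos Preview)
The paper does not give a proof of this theorem: it is quoted verbatim from \cite{Balwe-Hogadi-Sawant} and used as a black box, so there is no argument in the present paper to compare your proposal against.

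That said, a few comments on your sketch. Your multiplication-homotopy argument for the first assertion is the right idea, and the obstacle you flag is less serious than you suggest. Since every object of $Sm/k$ is Noetherian, Nisnevich covers can be taken finite, and hence filtered colimits of Nisnevich sheaves are computed sectionwise; in particular any $\alpha\in\sL(\sF)(U\times\A^1)$ already lifts to some $S^n(\sF)(U\times\A^1)$ without passing to a cover, so your homotopy $\Phi$ lives globally at stage $n$ and identifies $\alpha$ with $p^*s_0^*\alpha$ in $S^{n+1}$. No bookkeeping with non-product covers is needed.

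For the second assertion, your appeal to ``Morel's explicit presentation of $L_{\A^1}$ as a homotopy colimit of iterated $Sing_*$ and Nisnevich-fibrant replacements'' is vague and, more to the point, unnecessary. Once $\sL(\sF)$ is known to be $\A^1$-invariant, it is $\A^1$-local as a discrete simplicial sheaf (an $\A^1$-invariant Nisnevich sheaf of sets is automatically $\A^1$-local), and the hypothesis says the same of $\pi_0^{\A^1}(\sF)$. You have already checked that $\sL(\sF)$ is the universal $\A^1$-invariant sheaf of sets under $\sF$; the adjunction defining $L_{\A^1}$, together with the identification $\Hom_{\sH_s}(\sX,\sG)=\Hom(\pi_0^s(\sX),\sG)$ for discrete $\sG$, shows that $\pi_0^{\A^1}(\sF)$ satisfies the \emph{same} universal property. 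The isomorphism follows immediately, with no need to analyze the localization tower or to argue separately that $q$ is a monomorphism.
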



This suggests a method to verify when two sections of a sheaf map to the same element in its $\pi_0^{\A^1}$ (see Lemma \ref{key-lemma} below).  We will use a well-known characterization of Nisnevich sheaves, which we will recall here for the sake of convenience.

For any scheme $U$, an \emph{elementary Nisnevich cover} of $U$ consists of two morphisms $p_1: V_1 \to U$ and $p_2: V_2 \to U$ such that:
\begin{itemize}
 \item[(i)] $p_1$ is an open immersion.
 \item[(ii)] $p_2$ is an \'etale morphism and its restriction to $p_2^{-1}(U \backslash p_1(V_1))$ is an isomorphism onto $U \backslash p_1(V_1)$. 
\end{itemize}
Then a presheaf of sets $\sF$ on $Sm/k$ is a sheaf in Nisnevich topology if and only if the morphism $$\sF(U) \to \sF(V_1) \times_{\sF(V_1 \times_U V_2)} \sF(V_2)$$ is an isomorphism, for all elementary Nisnevich covers $\{ V_1, V_2\}$ of $U$. (See \cite[\textsection 3, Proposition 1.4, p.96]{Morel-Voevodsky} for a proof.)

\begin{lem}
\label{key-lemma}
Let $\sF$ be a sheaf of sets over $Sm/k$ such that the sheaf $\pi_0^{\A^1}(\sF)$ is $\A^1$-invariant. Let $U$ be a smooth scheme over $k$ and let $f,g: U \to \sF$ be two morphisms. Suppose that we are given data of the form 
\[
 \left(\{p_V: V \to \A^1_U, p_W: W \to \A^1_U \}, \{\sigma_0,\sigma_1\}, \{h_V, h_W\}, h \right)
\]
\noindent satisfying the following conditions:
\begin{itemize}
 \item The two morphisms $\{p_V: V \to \A^1_U, p_W: W \to \A^1_U\}$ constitute an elementary Nisnevich cover.
 \item For $i \in \{0,1\}$, $\sigma_i$ is a morphism $U \to V \coprod W$ such that $(p_V \coprod p_W) \circ \sigma_i: U \to U \times \A^1$ is the closed embedding $U \times \{i\} \hookrightarrow U \times \A^1$.
 \item $h_V$ and $h_W$ are morphisms from $V$ and $W$ respectively into $\sF$ such that $(h_V \coprod h_W) \circ \sigma_0 = f$ and $(h_V \coprod h_W) \circ \sigma_1 = g$. 
 \item Let $pr_V: V\times_{\A^1_U} W \to V$ and $pr_W: V \times_{\A^1_U} W \to W$  denote the projection morphisms. Then $h = (h_1, \ldots, h_n)$ is an $\A^1$-chain homotopy connecting the two morphisms $h_V \circ pr_V$ and $h_W \circ pr_W: V \times_{\A^1_U} W  \to \sF$. 
\end{itemize}
Then $f$ and $g$ map to the same element under the map $\sF(U) \to \pi_0^{\A^1}(\sF)(U)$.
\end{lem}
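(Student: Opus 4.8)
The plan is to use Theorem \ref{theorem lim S^n} to replace $\pi_0^{\A^1}(\sF)$ by $\varinjlim_n S^n(\sF)$, and then to produce, from the given data, an explicit section over $\A^1_U$ of (a suitable iterate of) $S$ applied to $\sF$ that restricts to $f$ at $0$ and to $g$ at $1$. Since $\varinjlim_n S^n(\sF)$ is $\A^1$-invariant (by Theorem \ref{theorem lim S^n}), such a section forces $f$ and $g$ to have the same image in $\left(\varinjlim_n S^n(\sF)\right)(U)$, hence the same image in $\pi_0^{\A^1}(\sF)(U)$ via the isomorphism of Theorem \ref{theorem lim S^n}; note the hypothesis that $\pi_0^{\A^1}(\sF)$ is $\A^1$-invariant is exactly what makes that isomorphism available.

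First I would record the elementary observation that for any sheaf $\sG$, an $\A^1$-chain homotopy between two morphisms $T \to \sG$ becomes an \emph{equality} after composing with $\sG \to S(\sG)$, and more generally that $\A^1$-homotopic morphisms agree in $S(\sG)$; consequently the relation of $\A^1$-chain homotopy on $\sF(T)$ is killed one level at a time by the tower $\sF \to S(\sF) \to S^2(\sF) \to \cdots$. Applying this to the chain homotopy $h = (h_1,\dots,h_n)$ on $V \times_{\A^1_U} W$, the two morphisms $h_V \circ pr_V$ and $h_W \circ pr_W$ become equal after composing with $\sF \to S^n(\sF)$ (one level used up per link of the chain; a coarser bound suffices, so I will not optimize $n$). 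Writing $\sG := S^n(\sF)$, we now have morphisms $\bar h_V: V \to \sG$ and $\bar h_W: W \to \sG$ that agree on the fibre product $V \times_{\A^1_U} W$.

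Next I would invoke the sheaf condition recalled just before the lemma: $\{p_V: V \to \A^1_U,\ p_W: W \to \A^1_U\}$ is an elementary Nisnevich cover of $\A^1_U$, and $\sG$ is a Nisnevich sheaf, so $\bar h_V$ and $\bar h_W$ glue to a unique morphism $H: \A^1_U \to \sG$. The compatibility conditions $(h_V \coprod h_W)\circ \sigma_0 = f$ and $(h_V\coprod h_W)\circ \sigma_1 = g$, together with the requirement that $(p_V \coprod p_W)\circ \sigma_i$ be the closed embedding $U \times \{i\} \hookrightarrow \A^1_U$, say precisely that $H$ restricts along $U \times \{0\} \hookrightarrow \A^1_U$ to the image of $f$ in $\sG(U)$ and along $U \times \{1\}$ to the image of $g$. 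Finally, compose $H$ with the canonical map $\sG = S^n(\sF) \to \varinjlim_n S^n(\sF) =: \sF_\infty$. Since $\sF_\infty$ is $\A^1$-invariant, $\sF_\infty(\A^1_U) \to \sF_\infty(U)$ (restriction along, say, $U \times \{0\}$) is a bijection, and both restrictions $U \times \{0\} \hookrightarrow \A^1_U$ and $U \times \{1\} \hookrightarrow \A^1_U$ induce the same map on $\sF_\infty$; hence the images of $f$ and $g$ in $\sF_\infty(U)$ coincide, and therefore so do their images in $\pi_0^{\A^1}(\sF)(U)$.

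The only genuinely delicate point is bookkeeping: making sure that composing a chain homotopy of length $n$ with the tower really does produce an honest equality of morphisms into some $S^N(\sF)$, and that this is compatible with base change so that gluing along the Nisnevich cover of $\A^1_U$ is legitimate. This is where one must be slightly careful that $S$ is a functor on sheaves and that $\A^1$-homotopies over a base $T$ push forward correctly; everything else is a formal consequence of Theorem \ref{theorem lim S^n} and the Nisnevich sheaf criterion. I expect no serious obstacle beyond this indexing.
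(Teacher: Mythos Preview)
Your argument is correct and essentially the same as the paper's: glue $h_V$ and $h_W$ in an iterate of $\sS$ applied to $\sF$ to obtain a morphism $\A^1_U \to \sS^N(\sF)$, then conclude via Theorem~\ref{theorem lim S^n}. The one simplification you miss is that $\sS(\sF)$ is by definition the Nisnevich sheafification of the presheaf of $\A^1$-\emph{chain} homotopy classes, so a single application of $\sS$ already kills the entire chain $h$ (not one level per link); hence the ``delicate bookkeeping'' you anticipate evaporates, and the paper simply glues in $\sS(\sF)$ and lands in $\sS^2(\sF)(U)$.
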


\begin{proof}
The data given above (which, in the terminology of \cite{Balwe-Hogadi-Sawant}, is a special case of an ``$\A^1$-ghost homotopy''), gives rise to a homotopy $H: \A^1_U \to \sS(\sF)$. Indeed, since $\{p_V, p_W\}$ is an elementary Nisnevich cover and since $\sS(\sF)$ is a Nisnevich sheaf, the two compositions $$U \stackrel{h_i}{\to} \sF \to \sS(\sF)$$ can be glued together to give a morphism $\A^1_U \to \sS(\sF)$ which connects the images of $f$ and $g$ in $\sS(\sF)(U)$. Thus $f$ and $g$ map to the same element of $\sS^2(\sF)(U)$.  We have the following commutative diagram:
\[
\xymatrix{
\sS(\sF) \ar[r] \ar[d] & \pi_0^{\A^1}(\sF) \ar[d] \\
\sS^2(\sF) \ar[r] & \varinjlim_n~ \sS^n(\sF)
} 
\]
\noindent Since $\pi_0^{\A^1}(\sF)$ is $\A^1$-invariant, by Theorem \ref{theorem lim S^n} we have $\pi_0^{\A^1}(\sF) \stackrel{\sim}{\to} \underset{n}{\varinjlim}~ \sS^n(\sF)$. Therefore, $f$ and $g$ map to the same element of $\pi_0^{\A^1}(\sF)(U)$. 
\end{proof}

\begin{rmk}
Using the arguments in \cite[Section 4.1]{Balwe-Hogadi-Sawant}, one can see that Lemma \ref{key-lemma} holds even without the hypothesis that $\pi_0^{\A^1}(\sF)$ is $\A^1$-invariant.  However, we make this simplifying assumption since we only need to use it in a situation where $\pi_0^{\A^1}(\sF)$ is known to be $\A^1$-invariant.
\end{rmk}

\section{Algebraic groups and \texorpdfstring{$R$}{R}-equivalence}
\label{section algebraic groups}

\begin{defn}
Let $G$ be an algebraic group over a field $k$.  Two $k$-rational points $x, y$ of $G$ are said to be \emph{$R$-equivalent} if there is a rational map $f: \P^1_k \dashrightarrow G$ defined at $0$ and $1$ such that $f(0)=x$ and $f(1)=y$.
\end{defn}

The relation of $R$-equivalence generates a normal subgroup of $G(k)$ and one denotes the group of $R$-equivalence classes of the set $G(k)$ by $G(k)/R$.  

\begin{notation}
Let $F$ be a field extension of $k$.  We set $$G(F)/R := (G \times_{\Spec k} \Spec F)(F)/R.$$ 
\end{notation}

\begin{defn}
For an algebraic group $G$ over a field $k$ and a field extension $F$ of $k$, let $G(F)^+$ be the subgroup of $G(F)$ generated by the subsets $U(F)$ where $U$ varies over all $F$-subgroups of $G$ which are isomorphic to the additive group $\G_a$.  The group
\[
W(F,G):= G(F)/G(F)^+
\]
\noindent is called the \emph{Whitehead group} of $G$ over $F$.
\end{defn}

We now state an interpretation of the known results in the isotropic case, which will play a crucial role in our proof of the Main Theorem.

\begin{thm}
\label{theorem isotropic}
Let $G$ be an isotropic, semisimple, simply connected, absolutely almost simple group over an infinite field $k$.  Then there is an isomorphism
\[
\pi_0^{\A^1}(G)(k) \simeq G(k)/R. 
\]
\end{thm}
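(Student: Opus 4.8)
The plan is to assemble this isomorphism out of known classical results, translated into the language of $\A^1$-connected components. First I would recall that for an isotropic, semisimple, simply connected, absolutely almost simple group $G$ over an infinite field $k$, the group $G(k)^+$ generated by the $k$-points of unipotent radicals of parabolic $k$-subgroups equals $G(k)$ itself; this is the theorem of Tits (building on Borel--Tits) asserting that the Whitehead group $W(k,G)$ is trivial in the isotropic case. In particular $G(k)$ is generated by images of morphisms $\G_a \to G$, and hence by images of $\A^1 \to G$ passing through the identity. Consequently every element of $G(k)$ is $\A^1$-chain homotopic to the identity, which already shows $\sS(G)(k)$ is a point; but that is too strong for what we want, so the real content must come from comparing $\pi_0^{\A^1}(G)(k)$ with $G(k)/R$ directly rather than through $\sS$.

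The correct route, I believe, is the following. On one hand, by a result of Morel (and its refinements, e.g. in the work on $\A^1$-homotopy of algebraic groups), for $G$ semisimple simply connected and isotropic the sheaf $\pi_0^{\A^1}(G)$ is strongly $\A^1$-invariant and its sections over a field $F$ are computed by naive $\A^1$-homotopy classes, i.e. $\pi_0^{\A^1}(G)(k) \simeq \sS(G)(k) = G(k)/\!\!\sim_{\A^1}$, where $\sim_{\A^1}$ is the equivalence relation generated by $\A^1$-homotopy. On the other hand, two $k$-points of $G$ are $R$-equivalent precisely when they are connected by a rational curve $\P^1 \dashrightarrow G$ defined at $0$ and $1$; standard arguments (using that $G$ is a smooth group, so a rational map from $\P^1$ can be spread out over an open subset of $\A^1$ and translated) show that $R$-equivalence on $G(k)$ coincides with the relation generated by chains of $\A^1$-homotopies through $G$. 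Thus both $\pi_0^{\A^1}(G)(k)$ and $G(k)/R$ are identified with $G(k)$ modulo $\A^1$-chain homotopy, and the canonical map $G(k) \to \pi_0^{\A^1}(G)(k)$ descends to the desired isomorphism $G(k)/R \xrightarrow{\sim} \pi_0^{\A^1}(G)(k)$. It is worth noting that by Gille's theorem (the isotropic Kneser--Tits-type statement) one in fact has $G(k)/R$ trivial here as well when $k$ has no proper such forms, but the isomorphism statement holds regardless and is what we record.

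The step I expect to be the main obstacle is making precise the identification of $R$-equivalence with the $\A^1$-chain homotopy relation on $G(k)$: one direction (an $\A^1$-homotopy $\A^1 \to G$ gives a rational map $\P^1 \dashrightarrow G$, hence an $R$-equivalence) is immediate, but the converse requires showing that a rational map $f \colon \P^1_k \dashrightarrow G$ defined at $0$ and $1$ can be replaced, after left-translation and using the group structure, by an honest morphism $\A^1_k \to G$ connecting $f(0)$ to $f(1)$ possibly through a chain. Here one uses that the locus of definition of $f$ is an open subset $\Omega \subseteq \P^1$ containing $0,1$, that $G$ is a smooth affine variety so that the complement points can be handled by composing with translations that move bad points off $\{0,1\}$, and that over an infinite field one has enough rational points to carry out these translations; this is exactly the kind of bookkeeping where the hypotheses ``isotropic'' and ``$k$ infinite'' enter, and where one must invoke the triviality of the Whitehead group to guarantee the requisite morphisms $\G_a \to G$ exist. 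Once this identification is in place, together with Morel's computation of $\pi_0^{\A^1}(G)$ in the isotropic case, the theorem follows formally.
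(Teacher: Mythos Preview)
Your overall shape---identify $\pi_0^{\A^1}(G)(k)$ with $\sS(G)(k)$ via $\A^1$-locality of $Sing_*(G)$, and then identify $\sS(G)(k)$ with $G(k)/R$---matches the paper. The second identification is where your argument goes wrong.

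First, your opening claim is false: the Whitehead group $W(k,G)$ is \emph{not} trivial for isotropic, semisimple, simply connected, absolutely almost simple $G$ in general. This is the Kneser--Tits problem, and Platonov produced counterexamples; so the sentence ``$\sS(G)(k)$ is a point'' is not merely ``too strong,'' it is incorrect. You seem to half-realize this and back off, but the confusion resurfaces in the last paragraph where you again speak of ``invoking the triviality of the Whitehead group.''

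Second, and more substantively, your plan for the direction ``$R$-equivalent $\Rightarrow$ $\A^1$-chain homotopic'' does not work as stated. A rational map $f\colon \P^1_k \dashrightarrow G$ defined at $0$ and $1$ may have poles at closed points of $\A^1_k$ with nontrivial residue field; ``translating bad points off $\{0,1\}$'' does nothing, since they are already off $\{0,1\}$, and there is no evident way to excise them using the group law to produce a regular map $\A^1_k \to G$.

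The paper bypasses this entirely. The key input is Gille's theorem (\cite[Th\'eor\`eme 7.2]{Gille}) that for such $G$ the natural surjection $G(k)/G(k)^+ \to G(k)/R$ is an \emph{isomorphism}, i.e.\ $W(k,G) \cong G(k)/R$. Granting this, two $R$-equivalent points $p,q$ satisfy $p^{-1}q \in G(k)^+$, hence $p^{-1}q$ is a finite product of elements lying in images of $k$-homomorphisms $\G_a \to G$. Each such factor gives an honest morphism $\A^1_k \to G$ through the identity, and chaining these (using the group multiplication) gives an $\A^1$-chain homotopy from $p$ to $q$. No manipulation of rational maps is needed. The other direction and the step $\sS(G) \simeq \pi_0^{\A^1}(G)$ (via V\"olkel--Wendt's $\A^1$-locality of $Sing_*(G)$) are as you say.
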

\begin{proof}
Note that the canonical quotient map $G(k) \to G(k)/R$ clearly factors through the map $G(k) \to \sS(G)(k)$.  By \cite[Th\'eor\`eme 7.2]{Gille}, we identify $G(k)/R$ with the Whitehead group $W(k, G)$.  Therefore, any two $R$-equivalent elements of $G(k)$ differ by an element of $G(k)^+$, which gives an $\A^1$-chain homotopy between the two elements.  This shows that $\sS(G)(k) = G(k)/R$.

A result of V\"olkel-Wendt \cite[Corollary 3.4, Proposition 4.1]{Wendt-Voelkel} and Moser (unpublished) says that for an isotropic reductive group $G$, $Sing_*(G)$ is $\A^1$-local.  Therefore, the canonical map $\sS(G) \to \pi_0^{\A^1}(G)$ is an isomorphism.
\end{proof}

We next quote a straightforward consequence of \cite[8.2]{Borel-Tits}.

\begin{thm}[Borel-Tits]
\label{theorem existence of compactification}
Let $G$ be a smooth affine group scheme over a perfect field $k$.  Then the following are equivalent:\\
\noindent $(1)$ $G$ admits no $k$-subgroup isomorphic to $\G_a$ or $\G_m$.\\ 
\noindent $(2)$ $G$ admits a $G$-equivariant compactification $\-G$ such that $G(k) = \-G(k)$.
\end{thm}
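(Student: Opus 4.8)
I would prove the two implications separately. The implication $(2)\Rightarrow(1)$ is elementary, and I argue by contraposition. Suppose $H\subseteq G$ is a $k$-subgroup with $H\cong\G_a$ or $H\cong\G_m$; recall that a subgroup scheme of an affine $k$-group scheme is closed, so $H$ is closed in $G$. Regard $H$ as an open subscheme of $\P^1_k$ in the usual way, so that $\P^1_k\setminus H$ is one $k$-point when $H\cong\G_a$ and two $k$-points when $H\cong\G_m$. Since $\-G$ is proper over $k$, the inclusion $H\hookrightarrow\-G$ extends, by the valuative criterion, to a morphism $f\colon\P^1_k\to\-G$, which is nonconstant because $f|_H$ is the inclusion of $H$ into $G$. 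As $G$ is open in $\-G$, the subscheme $f^{-1}(G)$ is open, hence dense, in $\P^1_k$ and contains $H$; since $H$ is dense in $f^{-1}(G)$ and closed in $G$, the image $f(f^{-1}(G))$ is contained in the closure of $f(H)=H$ in $G$, which is $H$ itself. Now if $f$ sent some point of $\P^1_k\setminus H$ into $G(k)$, then $f^{-1}(G)$ would contain a copy of $\A^1_k$ when $H\cong\G_m$ and would be all of $\P^1_k$ when $H\cong\G_a$, producing a nonconstant morphism $\A^1_k\to\G_m$ or $\P^1_k\to\G_a$ respectively, both of which are impossible. Hence $f$ maps $\P^1_k\setminus H$ into $\-G(k)\setminus G(k)$, which is therefore nonempty, contradicting $(2)$.

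For $(1)\Rightarrow(2)$ I would first reduce to the case where $G$ is connected, reductive and anisotropic. Since $k$ is perfect, every nontrivial smooth connected unipotent $k$-group is $k$-split, hence contains a copy of $\G_a$; so $(1)$ forces the unipotent radical of $G^0$ to be trivial, \ie $G^0$ is reductive, and then $(1)$ is equivalent to anisotropy of $G^0$, because for a reductive group, by Borel-Tits, containing a copy of $\G_a$ is equivalent to being isotropic. If $G$ is disconnected, a compactification is obtained from one of $G^0$ by the contracted product $\-G:=G\times^{G^0}\-{G^0}$, which is projective over the finite $k$-scheme $G/G^0$ and carries a $G$-action extending left translation; I will not dwell on this reduction. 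For $G$ connected, reductive and anisotropic, I would produce the compactification explicitly: fix a closed embedding $\rho\colon G\hookrightarrow\mathrm{GL}(V)$ and let $\-G$ be the closure of the image of the morphism $g\mapsto[\rho(g):\rho(g)^{-1}:1]$ from $G$ into $\P(\mathrm{End}(V)\oplus\mathrm{End}(V)\oplus k)$. The linear $G$-action $h\cdot(A,B,t)=(hA,Bh^{-1},t)$ preserves $\-G$ and restricts to left translation on $G$, so $\-G$ is a $G$-equivariant compactification; moreover the locus $\{t\neq0\}$, which is the affine space $\mathrm{End}(V)\oplus\mathrm{End}(V)$, meets $\-G$ exactly in $G$, since on it the equation $AB=I$ already forces $A$ invertible and cuts out a copy of $\mathrm{GL}(V)$ in which $G$ is closed.

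It remains to show that the boundary $\-G\setminus G=\-G\cap\{t=0\}$ contains no $k$-rational point, and this is precisely where \cite[8.2]{Borel-Tits} is used and where, I expect, the main difficulty lies. The guiding principle is that in an equivariant projective compactification of a reductive group the $G$-orbits in the boundary have stabilizers built from the unipotent radicals of proper parabolic subgroups, so a $k$-point in the boundary would furnish a proper parabolic $k$-subgroup of $G$, equivalently a split copy of $\G_m$ (or of $\G_a$), contradicting anisotropy. All the remaining steps --- $k$-splitness of smooth connected unipotent groups over a perfect field, the contracted-product reduction, the closedness of subgroup schemes and of $\{(g,g^{-1}):g\in\mathrm{GL}(V)\}$ --- are routine; the role of \cite[8.2]{Borel-Tits} is exactly to carry out this one boundary analysis once a compactification has been constructed.
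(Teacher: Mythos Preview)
The paper does not prove this theorem at all: it is stated as a quotation, introduced by ``We next quote a straightforward consequence of \cite[8.2]{Borel-Tits}'', with no argument given. So there is no paper proof to compare your proposal against; you have supplied far more than the authors do.

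Your implication $(2)\Rightarrow(1)$ is correct and self-contained. The extension of $H\hookrightarrow\-G$ to $\P^1_k\to\-G$ via properness, the observation that $f(f^{-1}(G))$ lands in the closure of $H$ in $G$ (which equals $H$ since $G$ is open in $\-G$ and $H$ is closed in $G$), and the impossibility of nonconstant maps $\A^1_k\to\G_m$ or $\P^1_k\to\G_a$ all go through as written. Note that your argument never uses $G$-equivariance of $\-G$, so it actually shows the stronger statement that no compactification whatsoever can satisfy $G(k)=\-G(k)$.

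For $(1)\Rightarrow(2)$, your reduction to the connected reductive anisotropic case is standard, and the explicit compactification via $g\mapsto[\rho(g):\rho(g)^{-1}:1]$ is a clean construction with the $G$-equivariance and the identification $\-G\cap\{t\neq 0\}=G$ both verified correctly. You are right that the entire content lies in showing the boundary has no $k$-point, and you are right to locate this in \cite[8.2]{Borel-Tits}. One caution: your heuristic that ``boundary stabilizers are built from unipotent radicals of proper parabolics'' is not automatic for an arbitrary equivariant compactification---a positive-dimensional stabilizer could a priori be, say, an anisotropic reductive subgroup---so the argument really does need the specific input from Borel--Tits rather than general orbit-dimension reasoning. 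But you flag this honestly, and this is exactly the point at which the paper, too, simply invokes the citation.
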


We end this section by noting down a few simple observations, which will be useful in the proof of the Main Theorem.

\begin{lem}
\label{lemma rational points}
Let $G$ be an anisotropic group over a perfect field $k$.  Then any rational map $h: \P^1_k \dashrightarrow G$ is defined at all the $k$-rational points of $\P^1_k$.
\end{lem}
\begin{proof}
By Theorem \ref{theorem existence of compactification}, there exists a compactification $\-G$ of $G$ such that $G(k) = \-G(k)$.  Clearly $h$ can be extended to a morphism $\-h: \P^1_{k} \to \-G$ and the lemma follows.  
\end{proof}

\begin{lem}
\label{lemma-compactness}
Let $G$ be an anisotropic group over a perfect field $k$.  Then there are no non-constant morphisms from $\A^1_{k}$ into $G$ and consequently, $$\sS(G)(k) = G(k).$$
\end{lem}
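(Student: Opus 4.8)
The plan is to deduce both assertions from the Borel--Tits compactification supplied by Theorem~\ref{theorem existence of compactification}, in the same spirit as the proof of Lemma~\ref{lemma rational points}. Since $G$ is anisotropic, it contains no $k$-subgroup isomorphic to $\G_a$ or $\G_m$, so Theorem~\ref{theorem existence of compactification} furnishes a $G$-equivariant compactification $\-G$ of $G$ with $\-G(k) = G(k)$, and $G$ sits inside $\-G$ as an open subscheme.

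First I would rule out non-constant morphisms $\A^1_k \to G$. Let $f : \A^1_k \to G$ be a morphism and compose it with the open immersion $G \hookrightarrow \-G$, viewing the result as a rational map $\P^1_k \dashrightarrow \-G$. Since $\P^1_k$ is a smooth curve and $\-G$ is proper over $k$, the valuative criterion of properness shows that this rational map extends to a morphism $\-f : \P^1_k \to \-G$ defined on all of $\P^1_k$. The point $\-f(\infty)$ is a $k$-rational point of $\-G$, hence lies in $\-G(k) = G(k)\subseteq G$; combined with $\-f(\A^1_k)\subseteq G$ and the fact that $G\hookrightarrow \-G$ is an open immersion, this shows $\-f$ factors through a morphism $\P^1_k \to G$. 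But $G$ is affine and $\Gamma(\P^1_k, \sO_{\P^1_k}) = k$, so this morphism corresponds to a $k$-algebra homomorphism $\Gamma(G,\sO_G)\to k$ and is therefore constant. Hence $\-f$, and a fortiori $f$, is constant. (Note that affineness of $G$ alone does not suffice here — it is the coincidence $\-G(k)=G(k)$, special to the anisotropic case, that forces the extended map to land in $G$.)

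For the equality $\sS(G)(k) = G(k)$, recall from Section~\ref{section-preliminaries} that $\sS(G)$ is the Nisnevich sheafification of the presheaf $U \mapsto G(U)/{\sim}$, where $\sim$ denotes $\A^1$-chain homotopy. Any Nisnevich cover of $\Spec k$ is refined by the trivial cover — an étale $k$-scheme carrying a $k$-rational point has $\Spec k$ as a direct factor — so sheafification does not change sections over $\Spec k$, and $\sS(G)(k)$ is simply the set of $\A^1$-chain homotopy classes in $G(k)$. By the first part, every $\A^1$-homotopy $\A^1_k \to G$ between two $k$-points of $G$ is constant, so distinct $k$-points of $G$ are never $\A^1$-chain homotopic; therefore $\sS(G)(k) = G(k)$.

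The statement is elementary, so I do not anticipate a genuine obstacle. The only two points requiring a little care are the extension of the rational map $\P^1_k \dashrightarrow \-G$ across the point $\infty$ — handled by properness of $\-G$ together with $\-G(k)=G(k)$, which is precisely what pins the extension down inside $G$ — and the observation that Nisnevich sheafification is harmless over the base field, so that $\sS(G)(k)$ is really computed by $\A^1$-chain homotopy classes of honest $k$-points.
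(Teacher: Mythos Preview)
Your proof is correct and follows essentially the same approach as the paper: extend to the compactification $\-G$, use $\-G(k)=G(k)$ to see the extension lands in the open affine $G$, and conclude constancy. You spell out a couple of points the paper leaves implicit (the valuative criterion for the extension, and why Nisnevich sheafification over $\Spec k$ is harmless), but the argument is the same.
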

\begin{proof}
Again, obtain a compactification $\-G$ of $G$ such that $G(k) = \-G(k)$ by applying Theorem \ref{theorem existence of compactification}.  Any morphism $h: \A^1_{k} \to G$ can be extended to a morphism $\-h: \P^1_{k} \to \-G$. By Lemma \ref{lemma rational points}, the morphism $\-h$ maps all the $k$-rational points of $\P^1_{k}$ into $G(k)$.  Since $h$ maps every point of $\P^1_{k}$ other than $\infty$ into $G$ anyway, we see that $\-h$ maps $\P^1_{k}$ into $G$ which is an affine scheme. Thus, $\-h$ is the constant map. This shows that $\sS(G)(k) = G(k)$. 
\end{proof}

\section{Proof of the main theorem}
\label{section-proofs}

This section will be devoted to the proof of the Main Theorem stated in the introduction.  We recall that according to \cite[Theorem 4.18]{Choudhury}, for any algebraic group $G$, the sheaf $\pi_0^{\A^1}(G)$ is $\A^1$-invariant. This allows us to use Lemma \ref{key-lemma} in the following proof.

\begin{conventions}
We will use the following conventions in this section:
\begin{itemize}
\item[(1)] For any scheme $X$ over $k$ and any field extension $L/k$, $X_{L}$ will denote the pullback $X \times_{\Spec(k)} \Spec(L)$ over $L$. Similarly, for any morphism $f: X \to Y$ between schemes over $k$, we will denote by $f_{L}: X_{L} \to Y_{L}$ the pullback of $f$ with respect to the projection $Y_{L} \to Y$.  
\item[(2)] For any smooth scheme $U$ over $k$ and any sheaf $\sF$ on $Sm/k$, we will say that $f,g \in \sF(U)$ are \emph{$\A^1$-equivalent} if they map to the same element of $\pi_0^{\A^1}(\sF)(U)$.
\end{itemize}
\end{conventions}

\begin{thm}
\label{theorem-anisotropic}
Let $G$ be an anisotropic, semisimple, absolutely almost simple, simply connected group over a field $k$ of characteristic $0$.  Let $F$ be a field extension of $k$.  Then two elements of $G(F)$ are $R$-equivalent if and only if they are $\A^1$-equivalent.
\end{thm}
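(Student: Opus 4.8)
The strategy is to prove the two implications separately, with the harder direction being that $\A^1$-equivalence implies $R$-equivalence. For that direction I would take $x, y \in G(F)$ that are $\A^1$-equivalent and reduce, via Lemma \ref{key-lemma} and Theorem \ref{theorem lim S^n}, to the situation where $x$ and $y$ are connected by a chain of ``elementary'' $\A^1$-homotopies over the Nisnevich site. Concretely, the statement that $x$ and $y$ have the same image in $\pi_0^{\A^1}(G)(F) = \varinjlim_n \sS^n(G)(F)$ unwinds into a finite iterated ghost-homotopy datum: there are smooth $F$-schemes mapping to $\A^1_{\Spec F}$, forming elementary Nisnevich covers, with partially-defined $\A^1$-homotopies on the pieces and compatibility data on the overlaps. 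The key point will be to promote such a ghost homotopy into an honest chain of actual $\A^1$-homotopies $\A^1_F \to G$, i.e.\ to show $\sS(G_F)(F) = \pi_0^{\A^1}(G)(F)$ in this anisotropic setting. This is exactly where the anisotropy hypothesis enters: by Lemma \ref{lemma-compactness} applied over $F$ (note $k$ of characteristic $0$ implies $F$ is perfect, and $G_F$ remains anisotropic since anisotropy of an absolutely almost simple simply connected group is insensitive to the extension — or rather, one works directly with $G_F$, which is semisimple absolutely almost simple simply connected, and if it were isotropic one argues separately), every morphism $\A^1_F \to G_F$ is constant, so an $\A^1$-homotopy over the base point $\Spec F$ carries no information; the content must come from the covering schemes $V, W$, which are positive-dimensional. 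Here one uses that a non-constant map $\mathbb{A}^1 \to G$ or a rational map $\mathbb{P}^1 \dashrightarrow G$ still lands in $G$ thanks to the $G$-equivariant compactification of Borel--Tits (Theorem \ref{theorem existence of compactification}, via Lemmas \ref{lemma rational points} and \ref{lemma-compactness}). Tracing the generic points of the covering schemes and restricting the homotopies along curves through the marked sections $\sigma_0, \sigma_1$ should produce, for each link in the chain, a rational map $\mathbb{P}^1_F \dashrightarrow G$ (or a composite of such) connecting consecutive points — hence an $R$-equivalence.

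For the easy direction, suppose $x, y \in G(F)$ are $R$-equivalent. Then there is a rational map $f \colon \mathbb{P}^1_F \dashrightarrow G$ defined at $0$ and $1$ with $f(0) = x$, $f(1) = y$. If $G_F$ is anisotropic, Lemma \ref{lemma rational points} tells us $f$ is in fact defined at \emph{all} $F$-rational points of $\mathbb{P}^1_F$; in particular its restriction to any $\mathbb{A}^1_F \subset \mathbb{P}^1_F$ containing $0$ and $1$ gives an honest morphism $\mathbb{A}^1_F \to G_F$ with $0 \mapsto x$, $1 \mapsto y$, i.e.\ a naive $\A^1$-homotopy. This immediately yields $\A^1$-equivalence of $x$ and $y$ (they become equal already in $\sS(G)(F)$, hence in $\pi_0^{\A^1}(G)(F)$). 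One still has to handle the case in which $G_F$ is isotropic: then Theorem \ref{theorem isotropic} applied over $F$ gives $\pi_0^{\A^1}(G_F)(F) \simeq G(F)/R$ directly, and both directions are immediate. So the proof naturally splits according to whether $G$ remains anisotropic after base change to $F$.

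The real work, and the main obstacle, is the hard direction's reduction from ghost homotopies to genuine chains of $R$-equivalences. The difficulty is bookkeeping: an element of $\varinjlim_n \sS^n(G)(F)$ witnessing $x \sim y$ can involve an arbitrarily deep iteration of the $\sS$ construction, and at each level one has covering data whose geometry must be controlled. The plan is to induct on this level $n$. The base case $n = 1$ is Lemma \ref{lemma-compactness} over $F$ (so $\sS(G)(F) = G(F)$, and any $\A^1$-chain homotopy of $F$-points is trivial — but we need chain homotopies over positive-dimensional bases, so more care is needed). For the inductive step, given that $x$ and $y$ agree in $\sS^{n}(G)(F)$, one lifts to a homotopy $\A^1_F \to \sS^{n-1}(G)$; using that $\sS^{n-1}(G)$ is a Nisnevich sheaf, this homotopy is assembled from an elementary Nisnevich cover $\{V, W\}$ of $\A^1_F$ with compatible sections into $\sS^{n-1}(G)$. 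One then needs to show that the $F$-points obtained by restricting to the two marked copies of $\Spec F$ are connected by an $R$-equivalence; the engine is again the compactification, which forces any family of maps parametrized by a curve and valued in $\sS^{n-1}(G)$ to have limits in $G(F)$ over $F$-points, letting us extract rational maps $\mathbb{P}^1_F \dashrightarrow G$. Making this last extraction precise — propagating ``valued in $G(F)$, not just in $\sS^{n-1}(G)(F)$'' through the tower — is the crux, and I expect it to require a careful application of the valuative criterion of properness to $\-G$ together with the fact (Lemma \ref{lemma rational points}) that rational maps from curves into an anisotropic $G$ have no poles at rational points. Once this is done, concatenating the $R$-equivalences produced at each stage and each link of the chain gives $x \sim_R y$, completing the proof.
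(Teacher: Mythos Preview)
Your ``easy direction'' contains a genuine gap that is in fact the heart of the theorem. From the fact that the rational map $f\colon \P^1_F \dashrightarrow G$ is defined at every $F$-rational point of $\P^1_F$ (Lemma~\ref{lemma rational points}) you conclude that $f$ restricts to an honest morphism $\A^1_F \to G_F$. This is false: the domain of definition of $f$ is an open subscheme of $\P^1_F$, and closed points of $\A^1_F$ with residue field a nontrivial finite extension of $F$ need not lie in it. Indeed, if your conclusion were correct, Lemma~\ref{lemma-compactness} would then force the resulting morphism $\A^1_F \to G_F$ to be constant, giving $x=y$; but distinct $R$-equivalent points always exist by unirationality of $G$, so the argument cannot work as written. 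The paper treats exactly this direction ($R$-equivalence $\Rightarrow$ $\A^1$-equivalence) as the substantive one: one lets $\{p_1,\dots,p_n\}$ be the closed points where $f$ is undefined, observes that $G_{L_i}$ must be \emph{isotropic} over each residue field $L_i$ (since $f_{L_i}$ fails to be defined at an $L_i$-rational point, contradicting Lemma~\ref{lemma rational points} if $G_{L_i}$ were anisotropic), and then invokes Gille's theorem $W(L_i,G)=W(L_i(t),G)$ to $\A^1$-chain homotope $f$ to a constant near each $p_i$. These local patches, together with the open set $V$ where $f$ is already defined, assemble into the elementary Nisnevich cover and ghost homotopy to which Lemma~\ref{key-lemma} applies. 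You have not identified this mechanism.

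Conversely, what you label the ``hard direction'' ($\A^1$-equivalence $\Rightarrow$ $R$-equivalence) admits a much shorter argument than your proposed induction through the tower $\sS^n(G)$: pass to the compactification $\-G$, use that for a proper scheme $\pi_0^{\A^1}(\-G)(k)=\sS(\-G)(k)$ by \cite[Theorem~2.4.3]{Asok-Morel}, and then read off an $\A^1$-chain homotopy in $\-G$ between $p$ and $q$; since $\-G(k)=G(k)$, this chain is a chain of rational maps $\P^1_k \dashrightarrow G$ witnessing $R$-equivalence. Your inductive unwinding of ghost homotopies is not needed, and the step you flag as ``the crux'' (extracting rational maps from sections of $\sS^{n-1}(G)$) is precisely what Asok--Morel's theorem supplies in one stroke.
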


\begin{proof}
In view of Theorem \ref{theorem isotropic}, observe that it suffices to prove the theorem in the case $F=k$.

\noindent \emph{Proof of the ``if" part:} By Theorem \ref{theorem existence of compactification}, there exists a compactification $\-G$ of $G$ such that $G(k) = \-G(k)$.  If two elements $p$ and $q$ of $G(k)$ are $\A^1$-equivalent, then $p$ and $q$ map to the same element in $\pi_0^{\A^1}(\-G)(k)$.  Since $\-G$ is proper over $k$, we can apply Theorem \cite[Theorem 2.4.3]{Asok-Morel} to conclude that $p$ and $q$ map to the same element in $\sS(\-G)(k)$.  Therefore, $p$ and $q$ are $\A^1$-chain homotopic $k$-rational points of $\-G$.  Since $\-G(k) \backslash G(k) = \emptyset$, it follows that $p$ and $q$ map to the same element in $G(k)/R$.

\noindent \emph{Proof of the ``only if" part:}
Let $p$ and $q$ be two elements of $G(k)$, which are $R$-equivalent. Thus, there is a rational map $h: \P^1_{k} \dashrightarrow G$ which is defined on $0$ and $1$ such that $h(0) = p$ and $h(1) = q$. Choose a compactification $\-G$ of $G$ such that $G(k) = \-G(k)$.  The rational map $h$ can be uniquely extended to a morphism $\-h: \P^1_{k} \to \-G$.   By Lemma \ref{lemma rational points}, $\-h$ maps all the $k$-rational points of $\P^1_{k}$ into $G$.  Thus, we see that $h$ is undefined only at points of $\A^1_{k}$ having residue fields that are non-trivial finite extensions of $k$.  We define $V := \-h^{-1}(G) \cap \A^1_k$ which is a Zariski open subscheme of $\A^1_k$. Let $\A^1_k \backslash V = \{p_1, \ldots, p_n\}$ and let the residue field at $p_i$ be $L_i$. We define $h_V: V \to G$ by $h_V := \-h|_{V}$. 

We claim that for each $i$, $G_{L_i}$ is an isotropic group. Indeed, the rational map $h_{L_i}:\P^1_{L_i} \dashrightarrow G_{L_i}$ is not defined at an $L_i$-rational point. Hence, by Lemma \ref{lemma rational points}, $G_{L_i}$ cannot be anisotropic.  

Since the group $G_{L_i}$ is isotropic, we may apply \cite[Th\'eor\`eme 5.8]{Gille}, which says that $W(L_i, G) = W(L_i(t), G)$. Thus any element of $G_{L_i}(L_i(t))$ can be connected by an $\A^1$-chain homotopy to an element in the image of the natural map $G_{L_i}(L_i) \to G_{L_i}(L_i(t))$. Applying this to the map $(h_V)_{L_i}: V_{L_i} \to G_{L_i}$, we see that there exists some open subscheme $V_i^{\prime}$ of $V_{L_i}$ such that the map $(h_{V})_{L_i}|_{V_i^{\prime}}$ can be connected by an $\A^1$-chain homotopy to a constant map taking $V_i^{\prime}$ to some $L_i$-rational point $q_i^{\prime} \in G_{L_i}(L_i)$. 

Choose a preimage $p_i'$ of $p_i$ under the projection map $\A^1_{L_i} \to \A^1_k$ for each $i$ and denote by $V_i$ the open subscheme of $\A^1_{L_i}$ given by $V_{i}' \cup \{ p_i'\}$. Let $q_i$ be the image of $q_i^{\prime}$ under the projection $G_{L_i} \to G$. We define $h_i: V_i \to G$ to be the constant map taking $V_i$ to the point $q_i$. Let $W:= \coprod_i V_i$ and let $h_W: W \to G$ be the map $\coprod_i h_i$. 

We define $p_V: V \to \A^1_k$ to be the inclusion. For each $i$, we define $p_i: V_i \to \A^1_k$ to be the composition $V_i \hookrightarrow \A^1_{L_i} \to \A^1_k$. Let $p_W: W \to \A^1_k$ be the map $\coprod_i p_i$.  Since $p_W^{-1}(\A^1_k \backslash V) = \{ p_1', \ldots, p_n'\}$, it is easy to see that $\{p_V, p_W\}$ is an elementary Nisnevich cover of $\A^1_k$. In order to apply Lemma \ref{key-lemma}, we need to show that the morphisms $h_V \circ pr_V$ and $h_W \circ pr_W$ from  $V \times_{\A^1_k} W $ to $G$ are $\A^1$-chain homotopic. 

For every $1 \leq i \leq n$, we have $V \times_{\A^1_k} V_i = V_i^{\prime}$. Thus $V \times_{\A^1_k} W = \coprod_i V_i^{\prime}$. The morphism $pr_V|_{V_i^{\prime}}$ is equal to the composition $V_i^{\prime} \hookrightarrow V_{L_i} \rightarrow V$. Also, the morphism $pr_W|_{V_{i}^{\prime}}$ is equal to the composition of inclusions $V_i^{\prime} \subset V_i \subset W$. 

For each $i$, we have the commutative diagrams
\[
\xymatrix{
V_i^{\prime} \ar@{^{(}->}[r] & V_{L_i} \ar[r]^{(h_V)_{L_i}} \ar[d] & G_{L_i} \ar[d] \\
                             & V       \ar[r]_{h_V}                & G 
}
\]
and
\[
\xymatrix{
V_i^{\prime} \ar@{^{(}->}[r] & V_i \ar[r]^{c_{q^{\prime}_i}} \ar[rd]_{h_W|_{V_i}}& G_{L_i} \ar[d] \\
                             &                                          & G     
}
\]
where $c_{q_i^{\prime}}$ is the constant map taking the scheme $V_{L_i}$ to $q_{i}^{\prime}$. By assumption, there exists an $\A^1$-chain homotopy connecting the maps $(h_V)_{L_i}|_{V_i^{\prime}}$ to the map $c_{q_i^{\prime}}|_{V_i^{\prime}}$. On composing with the projection map $G_{L_i} \to G$, this gives an $\A^1$-chain homotopy connecting the morphism $h_V \circ pr_V|_{V_{i}^{\prime}}$ to the morphism $h_W \circ pr_W|_{V_i^{\prime}}$. Thus, there exists an $\A^1$-chain homotopy connecting the morphisms $h_V \circ pr_V$ to the morphism $h_W \circ pr_W$. 

Thus, we may now apply Lemma \ref{key-lemma} to conclude that $p$ and $q$ map to the same element in $\pi_0^{\A^1}(G)(k)$.  This completes the proof of Theorem \ref{theorem-anisotropic}.
\end{proof}

\begin{rmk}
\label{remark Gabber}
An unpublished result of Gabber generalizes Theorem \ref{theorem existence of compactification} to fields that are not perfect and to groups that are not necessarily smooth.  This can be used to generalize Theorem \ref{theorem-anisotropic} to fields that are not perfect by closely following the proof of Theorem \ref{theorem-anisotropic}.  The only adjustment needed is in the proof of the ``if" part, where one replaces the use of \cite[Theorem 2.4.3]{Asok-Morel} with the use of \cite[Theorem 2 in the Introduction]{Balwe-Hogadi-Sawant}.
\end{rmk}

\begin{cor}
Let $G$ be as in Theorem \ref{theorem-anisotropic}.  Then $Sing_*(G)$ cannot be $\A^1$-local. 
\end{cor}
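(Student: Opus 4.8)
The plan is to derive a contradiction from the assumption that $Sing_*(G)$ is $\A^1$-local, playing the computation $\sS(G)(k) = G(k)$ of Lemma \ref{lemma-compactness} against the information about $\pi_0^{\A^1}(G)(k)$ provided by Theorem \ref{theorem-anisotropic}. First I would record the purely formal consequence of $\A^1$-locality: the simplicial sheaf $Sing_*(G)$ is always $\A^1$-weakly equivalent to $G$, so if $Sing_*(G)$ is in addition $\A^1$-local, then it is a model for the $\A^1$-localization $L_{\A^1}(G)$ in $\sH_s(k)$, and consequently the canonical map of Nisnevich sheaves $\sS(G) = \pi_0^s(Sing_*(G)) \to \pi_0^s(L_{\A^1}(G)) = \pi_0^{\A^1}(G)$ is an isomorphism. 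This is exactly the implication already invoked in the proof of Theorem \ref{theorem isotropic}. In particular, the natural map $\sS(G)(k) \to \pi_0^{\A^1}(G)(k)$ would be a bijection.

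Next I would feed in Lemma \ref{lemma-compactness}: since $\Char k = 0$ the field $k$ is perfect, so $\sS(G)(k) = G(k)$. Combining this with the previous paragraph, the composite $G(k) \to \sS(G)(k) \xrightarrow{\ \sim\ } \pi_0^{\A^1}(G)(k)$ would be a bijection, so in particular the canonical map $G(k) \to \pi_0^{\A^1}(G)(k)$ would be injective. On the other hand, the ``only if'' direction of Theorem \ref{theorem-anisotropic} (with $F = k$) asserts that any two $R$-equivalent points of $G(k)$ have the same image under this canonical map. Thus, to reach a contradiction it suffices to exhibit two \emph{distinct} $R$-equivalent $k$-points of $G$.

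To produce such points I would use that a semisimple group over an infinite field is unirational (a classical fact; here $k$ is infinite since $\Char k = 0$). Restricting a dominant rational map $\A^N \dashrightarrow G$ to a sufficiently general line yields a non-constant rational map $f \colon \A^1_k \dashrightarrow G$ (non-constant because $\dim G \geq 1$ and the map is dominant). Viewing $f$ as a rational map $\P^1_k \dashrightarrow G$, Lemma \ref{lemma rational points} guarantees that $f$ is defined at every $k$-rational point of $\P^1$; since $\P^1(k)$ is Zariski dense ($k$ being infinite) and $f$ is defined on a dense open, $f$ cannot be constant on $\P^1(k)$, so we may choose $a, b \in \P^1(k)$ with $f(a) \neq f(b)$. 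Precomposing $f$ with an automorphism of $\P^1_k$ carrying $0$ to $a$ and $1$ to $b$ exhibits $f(a)$ and $f(b)$ as distinct $R$-equivalent elements of $G(k)$, contradicting the injectivity obtained above. Hence $Sing_*(G)$ is not $\A^1$-local.

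The argument is short, and the only two steps that require care are the ``imported'' ingredients. The first is the passage from $\A^1$-locality of $Sing_*(G)$ to the isomorphism $\sS(G) \cong \pi_0^{\A^1}(G)$, which is a standard feature of the Morel--Voevodsky formalism and is already used earlier in the paper. The second — and the one I expect to be the real content — is the input that $R$-equivalence on $G(k)$ is not the trivial relation: without something like ``$G$ carries a non-constant rational curve through two distinct $k$-points'', the sheaves $\sS(G)$ and $\pi_0^{\A^1}(G)$ could a priori agree on $\Spec k$. Unirationality of $G$ over the infinite field $k$ is precisely what rules this out.
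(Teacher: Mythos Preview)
Your proof is correct and follows essentially the same route as the paper: both use unirationality of $G$ over the infinite field $k$ to produce two distinct $R$-equivalent points in $G(k)$, and then combine Lemma~\ref{lemma-compactness} (giving $\sS(G)(k)=G(k)$) with Theorem~\ref{theorem-anisotropic} to see that $\sS(G)(k)\to\pi_0^{\A^1}(G)(k)$ is not injective, so $Sing_*(G)$ cannot be $\A^1$-local. Your write-up is simply more explicit about each of these steps (and about extracting the non-constant rational curve from unirationality) than the paper's one-line version.
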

\begin{proof}
We simply note that there does exist a pair of distinct $R$-equivalent elements in $G(k)$. Indeed, this is an immediate consequence of the fact that $G$ is unirational over $k$ (see \cite[Theorem 18.2]{Borel}). Thus, the map $\sS(G)(k) \to \pi_0^{\A^1}(G)(k)$ is not a bijection. This shows that $Sing_*(G)$ cannot be $\A^1$-local.
\end{proof}

This completes the proof of the Main Theorem.  

\begin{rmk}
\label{remark R-equivalence abelian}
A long-standing open question in the study of $R$-equivalence asks if the group of $R$-equivalence classes $G(k)/R$ of a reductive algebraic group is always abelian.  This has been proved by Chernousov and Merkurjev (see \cite[Th\'eor\`eme 7.7]{Gille} and \cite[1.2]{Chernousov-Merkurjev}) in the case when $G$ is a semisimple, simply connected, absolutely almost simple and of classical type over $k$.
\end{rmk}

Thus, it is natural to conjecture the following.

\begin{conj} 
\label{conjecture R-equivalence}
Let $G$ be a reductive algebraic group over a field $k$.  Then $\pi_0^{\A^1}(G)(F) = G(F)/R$, for all field extensions $F$ of $k$.
\end{conj}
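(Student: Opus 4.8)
The plan is to reduce the statement to the case $F=k$ and then prove the two implications separately, exactly mirroring the structure of Theorem \ref{theorem isotropic} but handling the anisotropic obstruction. First I would observe that since $\pi_0^{\A^1}$ commutes with the base change in the relevant sense and $R$-equivalence over $F$ is by definition computed on $G_F$, it is enough to treat $F=k$; here $k$ has characteristic $0$, hence is perfect, so the results of Section \ref{section algebraic groups} apply.

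For the ``if'' direction, the key idea is to pass to a smooth $G$-equivariant compactification $\-G$ with $G(k)=\-G(k)$, which exists by Theorem \ref{theorem existence of compactification} (Borel--Tits) since $G$ anisotropic admits no $\G_a$ or $\G_m$ subgroup. Two $\A^1$-equivalent points $p,q\in G(k)$ remain $\A^1$-equivalent in $\-G(k)$; since $\-G$ is proper, \cite[Theorem 2.4.3]{Asok-Morel} identifies $\pi_0^{\A^1}(\-G)(k)$ with $\sS(\-G)(k)$, so $p$ and $q$ become $\A^1$-chain homotopic in $\-G$. Because $\-G(k)\setminus G(k)=\emptyset$, any chain of $\A^1$-homotopies between $k$-points lands in $G(k)$ after evaluating at $k$-points of $\A^1$, and each elementary homotopy $\A^1_k\to\-G$ restricts (via Lemma \ref{lemma rational points} applied to its extension along $\P^1$) to a rational map $\P^1_k\dashrightarrow G$ defined at $0,1$; concatenating these gives $p\sim_R q$.

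The ``only if'' direction is the main obstacle and requires the localization machinery. Starting from a rational map $h:\P^1_k\dashrightarrow G$ with $h(0)=p$, $h(1)=q$, I would extend it to $\-h:\P^1_k\to\-G$, note by Lemma \ref{lemma rational points} that $\-h$ sends all $k$-points into $G$, and let $V=\-h^{-1}(G)\cap\A^1_k$, whose complement is a finite set of closed points $p_i$ with residue fields $L_i$ that are nontrivial finite extensions of $k$. The crucial observation is that $G_{L_i}$ must be \emph{isotropic}: otherwise Lemma \ref{lemma rational points} applied to $h_{L_i}$ over the anisotropic group $G_{L_i}$ would force $\-h$ to be defined at an $L_i$-point lying over $p_i$, a contradiction. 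Now \cite[Th\'eor\`eme 5.8]{Gille} gives $W(L_i,G)=W(L_i(t),G)$, so $(h_V)_{L_i}$ restricted to a suitable open $V_i'\subseteq V_{L_i}$ is $\A^1$-chain homotopic to a constant map at some $q_i'\in G(L_i)$. One then assembles an elementary Nisnevich cover $\{p_V:V\to\A^1_k,\ p_W:W\to\A^1_k\}$ of $\A^1_k$, where $W=\coprod_i V_i$ with $V_i=V_i'\cup\{p_i'\}\subseteq\A^1_{L_i}$ and $h_W$ is the constant map $q_i$ (the image of $q_i'$ in $G$) on each piece, together with sections $\sigma_0,\sigma_1$ over $0,1$; checking that $V\times_{\A^1_k}W=\coprod_i V_i'$ and that the $\A^1$-chain homotopies above (pushed forward along $G_{L_i}\to G$) connect $h_V\circ pr_V$ to $h_W\circ pr_W$ verifies the hypotheses of Lemma \ref{key-lemma}. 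Since $\pi_0^{\A^1}(G)$ is $\A^1$-invariant by \cite[Theorem 4.18]{Choudhury}, Lemma \ref{key-lemma} then yields that $p$ and $q$ are $\A^1$-equivalent.

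Combining the two implications proves Theorem \ref{theorem-anisotropic}. The non-$\A^1$-locality of $Sing_*(G)$ then follows formally: $G$ is unirational over $k$ by \cite[Theorem 18.2]{Borel}, so $G(k)/R$ is a proper quotient of $G(k)$, whereas Lemma \ref{lemma-compactness} gives $\sS(G)(k)=G(k)$; hence $\sS(G)(k)\to\pi_0^{\A^1}(G)(k)=G(k)/R$ is not a bijection, which is impossible if $Sing_*(G)$ were $\A^1$-local (in which case $\sS(G)\simeq\pi_0^{\A^1}(G)$). Finally, the statement that $\pi_0^{\A^1}(G)$ is a sheaf of abelian groups follows by combining the above identification $\pi_0^{\A^1}(G)(F)\cong G(F)/R$ (over all extensions $F/k$, using Theorem \ref{theorem isotropic} in the isotropic case) with the theorem of Chernousov--Merkurjev (Remark \ref{remark R-equivalence abelian}) that $G(F)/R$ is abelian for such $G$. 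I expect the bookkeeping in the ``only if'' part --- precisely matching the fiber products, the sections, and the chain homotopies to the data format required by Lemma \ref{key-lemma} --- to be the most delicate point, together with the verification that the chosen $\{p_V,p_W\}$ is genuinely an elementary Nisnevich cover.
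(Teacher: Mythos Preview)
The statement you were asked to address is Conjecture~\ref{conjecture R-equivalence}, which concerns an \emph{arbitrary reductive} group $G$ over an \emph{arbitrary} field $k$. The paper does not prove this statement at all; it is explicitly recorded as an open conjecture. What you have written is instead a proof of Theorem~\ref{theorem-anisotropic} (the Main Theorem), and in fact your argument tracks the paper's proof of that theorem essentially line for line: the Borel--Tits compactification, the reduction via \cite[Theorem~2.4.3]{Asok-Morel} for the ``if'' part, the isotropy of $G_{L_i}$ at the bad points, the appeal to \cite[Th\'eor\`eme~5.8]{Gille}, the construction of the elementary Nisnevich cover, and the application of Lemma~\ref{key-lemma} together with \cite[Theorem~4.18]{Choudhury}.

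The genuine gap is therefore not in the details but in scope. Every step of your argument uses hypotheses that are absent from the conjecture: anisotropy of $G$ is needed for Theorem~\ref{theorem existence of compactification} to produce a compactification with no new rational points (and hence for Lemmas~\ref{lemma rational points} and \ref{lemma-compactness}); the hypotheses semisimple, absolutely almost simple, simply connected, and $\mathrm{char}\,k=0$ are needed to invoke \cite[Th\'eor\`eme~5.8]{Gille}. For a general reductive $G$ none of these tools is available, and your proof simply does not apply. In short, you have reproduced the paper's proof of its Main Theorem, but that theorem is only evidence for the conjecture, not a proof of it.
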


We end with a question posed by Anastasia Stavrova, which is open:

\begin{q} \label{question abelian}
Let $G$ be a reductive algebraic group over a field $k$.  Is $\pi_0^{\A^1}(G)$ a sheaf of abelian groups?
\end{q}

\begin{rmk}
We briefly explain how giving an affirmative answer to Question \ref{question abelian} is equivalent to giving an affirmative answer to the question of abelian-ness of the group of $R$-equivalence classes of a reductive algebraic group $G$ over a field $k$, if Conjecture \ref{conjecture R-equivalence} holds.  One implication is obvious.  For the other, observe that if $G(F)/R$ is abelian for any field extension $F/k$, to answer Question \ref{question abelian} affirmatively,  it suffices to prove that $\pi_0^{\A^1}(G)(\Spec A)$ is an abelian group for regular henselian rings $A$ containing $k$.  This follows from \cite[Corollary 4.17]{Choudhury}, which implies that $\pi_0^{\A^1}(G)(\Spec A)$ injects into $\pi_0^{\A^1}(G)(\Spec Q(A))$, where $Q(A)$ denotes the quotient field of $A$.  This proves the other implication.  This gives an affirmative answer to Question \ref{question abelian} in the case when $G$ is a semisimple, simply connected, absolutely almost simple and of classical type over a field $k$ (see Remark \ref{remark R-equivalence abelian} above).
\end{rmk}


\section*{Acknowledgements}
This note was inspired by Question \ref{question abelian}.  We thank Anastasia Stavrova for posing the question in the AIM Workshop on ``Projective modules and $\A^1$-homotopy theory" at Palo Alto; we also thank the AIM for hospitality.  It is a pleasure to thank M. V. Nori for discussions.

\end{document}